\title{A note on degree conditions for Ramsey goodness of trees}
\author{Zhidan Luo{\thanks{School of Mathematics and Statistics, Hainan University, Haikou 570228, P. R. China. Research supported in part by National Natural Science Foundation of China (No.12401449), the Hainan Provincial Natural Science Foundation of China (No.125QN209) and the Research Foundation Project of Hainan University (No. KYQD(ZR)-23155). Email: luodan@hainanu.edu.cn}}
\and Yuejian Peng \thanks{Corresponding author. School of Mathematics, Hunan University, Changsha 410082, P. R. China. Research supported in part by National Natural Science Foundation of China (No.12571363) and National Natural Science Foundation of Hunan Province (Grant No.2025JJ30003) Email: ypeng1@hnu.edu.cn}}
\date{}
\newtheorem{theo}{Theorem}[section]
\newtheorem{remark}[theo]{Remark}
\newtheorem{lemma}[theo]{Lemma}
\newtheorem{conj}[theo]{Conjecture}
\def\q{\hspace*{\fill}$\Box$\medskip}
\begin{document}
\maketitle

\begin{abstract}
  For given graphs $G_{1}, G_{2}$ and $G$, let $G\rightarrow (G_{1}, G_{2})$ denote that each red-blue-coloring of $E(G)$ yields a red copy of $G_{1}$ or a blue copy of $G_{2}$. Arag{\~a}o,  Marciano and  Mendon{\c c}a   [L. Arag{\~a}o, J. Pedro Marciano and W. Mendon{\c c}a, Degree conditions for Ramsey goodness of paths, {\it European Journal of Combinatorics}, {\bf 124} (2025), 104082] proved the following. Let $G$ be a graph on $N\geq (n- 1)(m- 1)+ 1$ vertices. If $\delta(G)\geq N- \lceil n/2\rceil$, then $G\rightarrow (P_{n}, K_{m})$, where $P_{n}$ is a tree on $n$ vertices.  In this note, we generalize $P_{n}$ to any tree $T_{n}$ with $n$ vertices, and improve the lower bound of $\delta(G)$. We further improve the lower bound when $T_{n}\neq K_{1, n- 1}$, which partially confirms their conjecture.

  \noindent {\bf Keywords:} Schelp problem; Ramsey goodness; tree.

  \noindent {\bf 2020 Mathematics Subject Classification:} 05C55; 05D10.
\end{abstract}

\section{Introduction}
  For graphs $G_{1}, \dots, G_{t}$ and $G$, let $G\rightarrow (G_{1}, \dots, G_{t})$ denote that any $t$-coloring of $E(G)$ yields a monochromatic copy of $G_{i}$ in color $i$ for some $i\in [t]$. The Ramsey number $r(G_{1}, \dots, G_{t})$ is the minimum positive integer $N$ such that $K_{N}\rightarrow (G_{1}, \dots, G_{t})$. Let $v(G)$ and $\chi(G)$ be the number of vertices and the chromatic number of $G$, respectively. Let $s(G)$ be the surplus of $G$, which is the size of a minimum color class taken over all proper colorings of $G$ with $\chi(G)$ colors. Let $T_{n}, K_{1, n- 1}$ and $P_{n}$ denote a tree, a star and a path on $n$ vertices, respectively.

  In 1981, Burr \cite{B} observed the following. For a connected graph $G$ and a graph $H$ with $v(G)\geq s(H)$, $r(G, H) \geq (v(G)- 1)(\chi(H)- 1)+ s(H)$ (let $G'$ be a red copy of the complete $\chi(H)$-partite graph such that all parts have sizes $v(G)- 1$ except one with size $s(H)- 1$. Moreover, color all edges in each part by blue. The resulting graph contains neither a red copy of $G$ nor a blue copy of $H$). We say that $G$ is $H$-good if the equality holds. There is a classical result from Chv{\'a}tal \cite{C} showing that $T_{n}$ is $K_{m}$-good for all positive integers $n$ and $m$, that is, $r(T_{n}, K_{m})= (n- 1)(m- 1)+ 1$. Recently, a similar question has attracted a lot of attention, which originated from Schelp \cite{S}. Let $G$ be a graph on $N\geq r(G_{1}, G_{2})$ vertices, what minimum degree condition guarantees that $G\rightarrow (G_{1}, G_{2})$? Recently, Arag{\~a}o, Pedro Marciano and Mendon{\c c}a studied $P_{n}$ versus $K_{m}$.

  \begin{theo}[Arag{\~a}o, Pedro Marciano and Mendon{\c c}a \cite{AMM}]\label{theo1.1}
    Let $G$ be a graph on $N\geq r(P_{n}, K_{m})= (n- 1)(m- 1)+ 1$ vertices. If $\delta(G)\geq N- \lceil n/2\rceil$, then $G\rightarrow (P_{n}, K_{m})$.
  \end{theo}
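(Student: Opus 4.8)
The plan is to prove the contrapositive form: fix a red/blue colouring of $E(G)$ with no red $P_n$, and produce a blue $K_m$. I would argue by induction on $m$. For the base case $m=2$ one has $N\geq n$, and if there is no blue edge then every edge is red, so the red graph equals $G$ and has minimum degree $\delta(G)\geq N-\lceil n/2\rceil\geq (N-1)/2$; the Dirac/Ore condition then yields a Hamilton path, hence a red $P_N\supseteq P_n$, a contradiction. Thus a blue $K_2$ exists.

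The key structural feature I would exploit is that the hypothesis $\delta(G)\geq N-\lceil n/2\rceil$ is inherited by every induced subgraph: if $W\subseteq V(G)$ and $u\in W$, then $u$ has at most $\lceil n/2\rceil-1$ non-neighbours in all of $G$, so $\deg_{G[W]}(u)\geq |W|-\lceil n/2\rceil$. This makes induction on blue neighbourhoods natural. Writing $N_B(v)$ for the set of blue neighbours of $v$ and $\deg_R(v)$ for its red degree, suppose some vertex $v$ satisfies $\deg_R(v)\leq N-\lceil n/2\rceil-(n-1)(m-2)-1$, equivalently $|N_B(v)|\geq (n-1)(m-2)+1$. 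Then $G[N_B(v)]$ has at least $r(P_n,K_{m-1})=(n-1)(m-2)+1$ vertices and, by the inherited degree bound, satisfies the hypothesis of the theorem for the pair $(n,m-1)$; since it still contains no red $P_n$, induction gives a blue $K_{m-1}$ inside $N_B(v)$, and appending $v$ produces a blue $K_m$. When $N=(n-1)(m-1)+1$ the required red-degree bound reads $\deg_R(v)\leq\lfloor n/2\rfloor-1$, and it only becomes easier as $N$ grows.

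It remains to locate a vertex of small red degree. Here I would analyse a longest red path $Q=u_1\cdots u_q$, where $q\leq n-1$. Since $Q$ is longest, all red neighbours of $u_1$ and of $u_q$ lie on $Q$, and the standard rotation argument (if $u_1$ is red-adjacent to $u_i$ and $u_q$ to $u_{i-1}$, then $u_1u_2\cdots u_{i-1}u_qu_{q-1}\cdots u_iu_1$ is a spanning red cycle on $V(Q)$) shows that, unless the red component of $Q$ is ``closed'' --- spanned by a red cycle and joined by no red edge to the rest of $G$ --- one has $\deg_R(u_1)+\deg_R(u_q)\leq q-1\leq n-2$, so one endpoint has red degree at most $\lfloor n/2\rfloor-1$. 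This feeds directly into the inductive step above.

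The main obstacle is the remaining regime, in which every longest-path component is closed, so the red graph is a disjoint union of dense components, each of size between roughly $\lceil n/2\rceil$ and $n-1$; since the component sizes sum to $N>(n-1)(m-1)$, there are at least $m$ of them. Because any edge between distinct red components is necessarily blue, producing a blue $K_m$ amounts to choosing one vertex from each of $m$ distinct components, pairwise non-adjacent in the non-edge graph $\overline{G}$, whose maximum degree is at most $\lceil n/2\rceil-1$. This is a (partial) independent-transversal problem sitting essentially at the Haxell threshold: the parts have size about $n/2$, whereas the natural sufficient bound is twice the maximum degree, about $n$, and both a naive greedy selection and a union-bound estimate fall short by a constant factor. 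I expect the real work to be here, and the resolution to hinge on the global constraints --- that the component sizes sum to at least $(n-1)(m-1)+1$ and that $G$ has large minimum degree --- rather than on any purely local transversal criterion.
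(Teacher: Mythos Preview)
The paper does not prove Theorem~1.1 itself: it is quoted as a result of Arag\~ao, Marciano and Mendon\c{c}a, and the paper's own contribution is the related Theorem~1.3, proved by an entirely different paradigm. There the degree hypothesis is encoded as a third colour: one colours $E(\overline G)$ with a new colour, so that ``$\delta(G)\ge N-\ell$'' becomes ``no $K_{1,\ell}$ in the new colour'', and $G\to(T_n,K_m)$ follows from the three-colour statement $K_N\to(K_{1,\ell},T_n,K_m)$. The work is then to compute $r(K_{1,\ell},T_n,K_m)$, which the paper does for $\ell=t(n-1)+1$ by a short induction on $m$. Note that this method, as carried out here, does \emph{not} recover Theorem~1.1: in the bottom range $(n-1)(m-1)+1\le N\le 2(n-1)(m-1)$ it yields only $\delta(G)\ge N-1$, i.e.\ $G$ complete, and the paper explicitly remarks at the end that sharper conclusions of this type would require determining $r(K_{1,\ell},P_n,K_m)$ for further values of~$\ell$.

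Your approach---induct on $m$, pass to the blue neighbourhood of a vertex of small red degree, and locate such a vertex via a longest-red-path rotation argument---is a completely different route, and the portion you have written out is correct. But the proposal is, by your own admission, incomplete: in the ``closed components'' regime, where every vertex has red degree at least $\lfloor n/2\rfloor$ and the red graph splits into components of size between $\lfloor n/2\rfloor+1$ and $n-1$, you have not produced the blue $K_m$. You are right that what remains is an independent-transversal problem sitting essentially at the Haxell threshold, and that neither a greedy choice nor a crude counting bound suffices; this is a genuine missing idea, not a routine detail, and without it the argument does not go through. Since the present paper offers no proof of this case either, to finish along your lines you would need either a bespoke transversal argument exploiting the global constraints you mention, or to consult the method of~\cite{AMM} directly.
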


  \noindent Moreover, they conjectured the following and showed that their conjecture holds for $m= 3$.

  \begin{conj}[Arag{\~a}o, Pedro Marciano and Mendon{\c c}a \cite{AMM}]\label{conj1.2}
    For given positive integers $n, m, N$ and a non-negative integer $t$, assume that $(t+ 1)(n- 1)(m- 1)+ 1\leq N\leq (t+ 2)(n- 1)(m- 1)$. Let $G$ be a graph on $N$ vertices with $\delta(G)\geq N- \left\lceil \frac{t+ 1}{t+ 2}\left\lceil \frac{N}{m- 1}\right\rceil\right\rceil$. Then $G\rightarrow (P_{n}, K_{m})$.
  \end{conj}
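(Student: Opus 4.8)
The plan is to argue by contradiction and to induct on $m$, following the strategy behind Chv{\'a}tal's proof that $T_{n}$ is $K_{m}$-good \cite{C}, but replacing the greedy tree-embedding (which needs red minimum degree about $n-1$) with a Dirac-type bound on a longest path: a connected graph of minimum degree $\mu$ and order $\nu$ contains a path on at least $\min\{2\mu+1,\nu\}$ vertices, so red minimum degree only about $(n-1)/2$ already suffices inside a large red piece. This is precisely the feature that makes the threshold in Theorem \ref{theo1.1} roughly $N-n/2$ rather than $N-n$, and it is what I would try to exploit for all $t$. Throughout, set $q=\lceil N/(m-1)\rceil$ and $D=\lceil \frac{t+1}{t+2}q\rceil$, so the hypothesis says every vertex has at most $D-1$ non-neighbours; denote by $P(m)$ the statement of Conjecture \ref{conj1.2} for that value of $m$. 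Suppose some red-blue colouring of $E(G)$ has no red $P_{n}$ and no blue $K_{m}$.

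For the base case $m=2$ (so $m-1=1$, $q=N$, $D=\lceil\frac{t+1}{t+2}N\rceil$), a blue $K_{2}$ is a single blue edge, so the colouring is entirely red and $\delta(G)\ge N-D$. I would first check that $N\ge (t+1)(n-1)+1$ forces $\delta(G)\ge (n-1)/2$ (after accounting for the ceiling), and that the minimum-degree hypothesis forbids $G$ from splitting into small components, since two components each of order at most $N-\delta(G)$ would violate $\delta(G)\ge N-D$ once $N$ is in the stated range. Hence $G$ has a component on at least $n$ vertices with minimum degree at least $(n-1)/2$, and Dirac's path bound produces a red $P_{n}$.

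For the inductive step I would fix a threshold $\beta$ and split according to whether some vertex has blue degree exceeding $\beta$. In the high-blue-degree case, pick such a $v$; its blue neighbourhood $W$ contains no blue $K_{m-1}$ (else $W\cup\{v\}$ is a blue $K_{m}$), and every $u\in W$ is adjacent in $G$ to all but at most $D-1$ of $W$, so $\delta(G[W])\ge |W|-D$ with $|W|>\beta$. I would then choose the integer $t'$ for which $N':=|W|$ lies in $[(t'+1)(n-1)(m-2)+1,\,(t'+2)(n-1)(m-2)]$ and verify the inherited bound $D\le \lceil \frac{t'+1}{t'+2}\lceil \frac{N'}{m-2}\rceil\rceil$; granting these, $P(m-1)$ applied to $G[W]$ forces a red $P_{n}$ (since $G[W]$ has no blue $K_{m-1}$), a contradiction. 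A leading-order computation is encouraging: writing $N\approx c(n-1)(m-1)$ with $t+1\le c\le t+2$ and $D\approx \frac{t+1}{t+2}c(n-1)$, the density $c':=N'/((n-1)(m-2))$ exceeds $c$ because the factor $(m-1-\frac{t+1}{t+2})/(m-2)>1$, and since $\frac{t'+1}{t'+2}$ increases in $t'$ this pushes the right-hand side above $D$. In the complementary case every vertex has red degree at least $N-D-\beta$; the crude choice $\beta=N-D-(n-1)$ makes red minimum degree at least $n-1$, so every red component has order at least $n$ and a greedy embedding gives a red $P_{n}$ outright, while the sharper $\beta\approx N-D-(n-1)/2$ (exploiting the Dirac advantage for paths) enlarges $W$ in the other case but leaves red minimum degree only about $(n-1)/2$ inside a \emph{connected} piece.

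The main obstacle is therefore twofold and concentrated in the lower-order terms. First, with the sharper $\beta$ one must still force a red component on at least $n$ vertices from red minimum degree only about $(n-1)/2$: a pairwise-adjacent transversal across $m$ red components would yield a blue $K_{m}$, but the naive greedy transversal fails because the components can be far smaller than $(m-1)(D-1)$, so a subtler use of the ``no blue $K_{m}$'' hypothesis is needed exactly when the red components are numerous and small. Second, one must choose a single $\beta$ and a compatible $t'$ for which both the range membership and the degraded inequality $D\le \lceil\frac{t'+1}{t'+2}\lceil N'/(m-2)\rceil\rceil$ hold simultaneously, uniformly in $n,m,t$. The interaction of the nested ceilings with the two ratios $\frac{t+1}{t+2}$ and $\frac{m-2}{m-1}$ is delicate, and it is exactly in the boundary regimes -- $N$ near the top of its range, small $m$, and the extremal colourings (red a disjoint union of near-cliques, blue complete $(m-1)$-partite) -- that the estimates are tightest. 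Controlling these regimes uniformly in $t$ is, I expect, what confines the present note to a partial confirmation of the conjecture.
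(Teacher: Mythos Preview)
The statement you are attempting is Conjecture \ref{conj1.2}, which the paper does \emph{not} prove: it is presented as an open conjecture, and the paper obtains only partial confirmations (the weaker bound $\delta(G)\ge N-t(n-1)-1$ in Theorem \ref{theo1.3}, and the narrow parameter ranges in the Remark after Theorem \ref{theo3.4}). So there is no ``paper's own proof'' to compare against, and your proposal is, by your own candid assessment, an outline with unresolved obstructions rather than a proof.

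The strategies are genuinely different. The paper's route to its partial results recasts the minimum-degree hypothesis as a \emph{third colour}: colour $E(\overline G)$ red, so that $\delta(G)\ge N-\ell$ becomes ``no red $K_{1,\ell}$'', and the problem reduces to the three-colour Ramsey number $r(K_{1,\ell},T_n,K_m)$, which is computed exactly for $\ell=t(n-1)+1$ (Theorem \ref{theo2.1}) and $\ell=t(n-1)+2$ (Theorem \ref{theo3.3}). This is clean and uniform in $m$ but cannot reach the conjectured threshold, because the required $\ell$ depends on $N$ through the nested ceilings and falls outside the arithmetic progressions handled by Theorems \ref{theo2.3} and \ref{theo3.1}. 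Your direct induction on $m$, splitting on maximum blue degree and exploiting the Dirac-type path bound, is closer in spirit to the original proof of Theorem \ref{theo1.1} and is the natural line of attack on the full conjecture.

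The gaps you flag are real and are exactly where the argument breaks. In the low-blue-degree branch with the sharp $\beta$, red minimum degree about $(n-1)/2$ does not by itself force a red component of order $\ge n$: Construction I shows the red graph can be a disjoint union of many cliques each of order below $n$, and your transversal idea for extracting a blue $K_m$ from $m$ such components requires every cross-pair to be a blue edge of $G$, which the hypothesis $\delta(G)\ge N-D$ does not guarantee once $D>1$. In the high-blue-degree branch, the verification that $D\le \lceil\tfrac{t'+1}{t'+2}\lceil N'/(m-2)\rceil\rceil$ is the crux; your leading-order heuristic is correct, but the two nested ceilings can each cost an additive $1$, and near the top of the $N$-range (where $D\approx (t+1)(n-1)$) this is enough to spoil the inequality for small $m$. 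Controlling these boundary losses uniformly in $n,m,t$ is precisely the difficulty the paper sidesteps by retreating to the three-colour Ramsey formulation and the weaker Theorem \ref{theo1.3}.
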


   If Conjecture \ref{conj1.2} is true, then the lower bound of $\delta(G)$ in Conjecture \ref{conj1.2} is tight as evidenced by the following construction.

   {\bf Construction I.} Assume that $(t+ 1)(n- 1)(m- 1)+ 1\leq N\leq (t+ 2)(n- 1)(m- 1)$. Let $G'$ be a complete almost balanced $(m- 1)$-partite graph (the size of each part differs by at most one) on $N$ vertices whose edges are colored by blue. Moreover, in each part, we embed a red copy of the graph consisting of the union of $t+ 2$ cliques such that the sizes of each clique differ by at most one. Denote the resulting graph by $G$. Note that $G\not\rightarrow (P_{n}, K_{m})$ (in fact, $G\not\rightarrow (T_{n}, K_{m})$) and $\delta(G)= N- \lceil\frac{N}{m- 1}\rceil+ \lfloor \frac{1}{t+ 2}\lceil \frac{N}{m- 1}\rceil\rfloor- 1= N- \lceil \frac{t+ 1}{t+ 2}\lceil \frac{N}{m- 1}\rceil\rceil- 1$.

  In this note, we generalize $P_{n}$ to $T_{n}$, improve the lower bound of $\delta(G)$ in Theorem \ref{theo1.1} for all $m$ and $t\geq 1$, and partially confirm Conjecture \ref{conj1.2}. We first show the following result.

  \begin{theo}\label{theo1.3}
    For given positive integers $n, m, N$ and a non-negative integer $t$, assume that $(t+ 1)(n- 1)(m- 1)+ 1\leq N\leq (t+ 2)(n- 1)(m- 1)$. Let $G$ be a graph on $N$ vertices with $\delta(G)\geq N- t(n- 1)- 1$. Then $G\rightarrow (T_{n}, K_{m})$.
  \end{theo}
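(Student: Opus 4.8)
The plan is to prove the following cleaner statement, which drops the upper bound on $N$ and immediately implies Theorem \ref{theo1.3}: if $N\geq (t+1)(n-1)(m-1)+1$ and $\delta(G)\geq N-t(n-1)-1$, then $G\rightarrow (T_{n}, K_{m})$. The degree hypothesis is equivalent to saying that every vertex of the complement $\overline{G}$ has degree at most $t(n-1)$; this ``small deficiency'' reformulation is the crux, since it passes verbatim to every induced subgraph. I would argue by induction on $m$, following Chv{\'a}tal's proof that $r(T_{n},K_{m})=(n-1)(m-1)+1$ \cite{C}, the only external input being the greedy embedding fact that a graph of minimum degree at least $n-1$ contains every tree on $n$ vertices. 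For the base case $m=2$, a $2$-coloring with no blue $K_{2}$ has all edges red, so the red graph has minimum degree $\delta(G)\geq N-t(n-1)-1\geq n-1$ (using $N\geq(t+1)(n-1)+1$), and greedy embedding produces a red $T_{n}$.

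For the inductive step, fix a red-blue coloring of $E(G)$ and assume there is no red $T_{n}$. Let $R$ be the red subgraph. Since $R$ contains no $T_{n}$, greedy embedding forces $\delta(R)\leq n-2$, so some vertex $v$ has red degree at most $n-2$; its blue degree is then at least $\delta(G)-(n-2)\geq N-(t+1)(n-1)$. Let $W$ be the set of blue neighbors of $v$. Two points must be checked. First, $|W|\geq N-(t+1)(n-1)\geq (t+1)(n-1)(m-2)+1$, which is exactly the lower bound needed to re-enter the hypothesis with parameters $(m-1,t)$ (if $|W|$ is larger than necessary, pass to an arbitrary subset of the required size). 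Second, the non-neighbors of any $w\in W$ inside $W$ form a subset of its non-neighbors in $G$, so $\overline{G[W]}$ again has maximum degree at most $t(n-1)$, that is $\delta(G[W])\geq |W|-t(n-1)-1$.

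Applying the induction hypothesis to $G[W]$ with the inherited coloring yields either a red $T_{n}$, contradicting our assumption, or a blue $K_{m-1}$ inside $W$. As every edge joining $v$ to $W$ is blue, this $K_{m-1}$ together with $v$ is a blue $K_{m}$, which completes the induction. The argument is short once the deficiency reformulation is in place, and I expect the only delicate point to be the counting inequality $N-(t+1)(n-1)\geq (t+1)(n-1)(m-2)+1$, which verifies that discarding a low-red-degree vertex keeps enough vertices to clear the Ramsey threshold at the next value of $m$. Notably the full Erd{\H o}s--S{\'o}s machinery is not needed, since only the existence of a low-degree vertex in the red graph is used; the sharper, conjecture-level gain of an extra $n-1$ in the deficiency (Conjecture \ref{conj1.2}) and the separate improvement for $T_{n}\neq K_{1,n-1}$ appear to require matching Construction I directly rather than this greedy induction.
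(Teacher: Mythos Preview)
Your argument is correct. The paper reaches the same conclusion through a different packaging: rather than inducting on $m$ directly inside $G$, it first proves the three-colour Ramsey identity $r(K_{1,\ell},T_{n},K_{m})=(t+1)(n-1)(m-1)+1$ for $\ell=t(n-1)+1$ (Theorem~\ref{theo2.1}), and then derives Theorem~\ref{theo1.3} in one line by colouring $E(\overline{G})$ with a third colour and noting that $\Delta(\overline{G})<\ell$ forbids a monochromatic $K_{1,\ell}$. If one unwinds the paper's induction on $m$ for that Ramsey number and specialises to the situation $\Delta(H_{r})<\ell$, one recovers exactly your two cases: their high-green-degree vertex is your low-red-degree $v$, and their appeal to Burr's $r(K_{1,\ell},T_{n})=\ell+n-1$ is your greedy base case. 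So the combinatorial content is identical. The paper's detour buys a standalone Ramsey result of independent interest and sets up the parallel refinement for non-stars in Section~3; your direct route is shorter and self-contained. One minor remark: the parenthetical about trimming $W$ to a subset of exact size is unnecessary, since your reformulated statement carries no upper bound on $N$ and the deficiency bound $\Delta(\overline{G[W]})\leq t(n-1)$ is inherited by every induced subgraph regardless of its order.
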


  Let $\Delta= \Delta(T_{n})$. We remark that the lower bound on $\delta(G)$ in Conjecture \ref{conj1.2} is not tight for $T_{n}$ when $\Delta> \lfloor \frac{1}{t+ 2}\lceil \frac{N}{m- 1}\rceil\rfloor$ and $n$ is even as evidenced by the following construction.

   {\bf Construction II.}  Let $G'$ be a complete almost balanced $(m- 1)$-partite graph on $N$ vertices whose edges are colored by blue. In each part, we embed a red copy of  a $(\Delta- 1)$-regular graph or a red copy of an almost $(\Delta- 1)$-regular graph (all vertices with degree $\Delta- 1$ except one vertex with degree $\Delta- 2$). Denote the resulting graph by $H$. Note that $H\not\rightarrow (T_{n}, K_{m})$ and $\delta(H)= N- \lceil\frac{N}{m- 1}\rceil+ \Delta- \epsilon$, where $\epsilon= 1$ if we can replace it with a $(\Delta- 1)$-regular graph, and otherwise $\epsilon= 2$.

  \begin{conj}\label{conj1.4}
    For given positive integers $n, m, N$ and a non-negative integer $t$, assume that $(t+ 1)(n- 1)(m- 1)+ 1\leq N\leq (t+ 2)(n- 1)(m- 1)$. Let $G$ be a graph on $N$ vertices with $\delta(G)\geq N- \lceil\frac{N}{m- 1}\rceil+ \max\{\Delta- \epsilon+ 1, \lfloor \frac{1}{t+ 2}\lceil \frac{N}{m- 1}\rceil\rfloor\}$. Then $G\rightarrow (T_{n}, K_{m})$.
  \end{conj}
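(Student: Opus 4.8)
The plan is to argue by contradiction. Suppose $G\not\to(T_n,K_m)$ and fix a red/blue colouring of $E(G)$ with red subgraph $R$ and blue subgraph $B$ containing no red $T_n$ and no blue $K_m$. Write $q=\lceil N/(m-1)\rceil$ and $d^*=\max\{\Delta-\epsilon+1,\ \lfloor q/(t+2)\rfloor\}$, so that the hypothesis reads $\delta(G)\ge N-q+d^*$ and each vertex misses at most $q-d^*-1$ vertices of $G$. The first thing I would record is that $N-q+d^*$ is exactly one more than the minimum degree of each of Constructions I and II, whose minimum degrees are $N-q+\lfloor q/(t+2)\rfloor-1$ and $N-q+\Delta-\epsilon$. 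Thus the bound must be tight against two genuinely different obstructions at once: red components too small to host $T_n$ (Construction I), and a red maximum degree one short of $\Delta$ (Construction II). Accordingly I expect the proof to split along which term of $d^*$ is active, with the hardest work in the regime where $\Delta-\epsilon+1$ and $\lfloor q/(t+2)\rfloor$ are comparable.

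The first ingredient I would establish is a tree–embedding lemma isolating these two pure obstructions: for a tree $T$ on $n$ vertices with maximum degree $\Delta$, a connected graph $C$ with $v(C)\ge n$ contains $T$ as soon as $\delta(C)\ge\max\{\Delta,\ \lceil(n-1)/2\rceil\}$. Here the term $\Delta$ lets us realise the degree-$\Delta$ vertex of $T$ and is precisely what the $(\Delta-1)$-regular red graphs of Construction II defeat, while the Erd\H{o}s--Gallai/Dirac-type term $\lceil(n-1)/2\rceil$ forces a long enough path, i.e.\ a spanning-enough subtree, and is what the small red cliques of Construction I defeat. Taking the contrapositive inside each red component, the absence of a red $T_n$ forces every component of $R$ to be small ($v(C)<n$) or to contain a vertex of small red degree; I would peel off such low-red-degree vertices to expose the genuinely red-sparse part of the graph, and dispose immediately of any large expanding component carrying a vertex of red degree $\ge\Delta$, since there the lemma already embeds $T_n$.

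On the red-sparse part the blue graph is dense, and this is where the blue $K_m$ must be extracted tightly. In the Construction-II regime, where red maximum degree is uniformly below $\Delta$, each vertex has $d_B(v)\ge\delta(G)-\Delta(R)\ge N-q+2-\epsilon$, which sits right at the Tur\'an threshold $N-\lceil N/(m-1)\rceil$; for $\epsilon=1$ the strict Tur\'an bound already produces $K_m$, while for the borderline $\epsilon=2$ I would invoke an Andr\'asfai--Erd\H{o}s--Sós-type stability theorem, concluding that a $K_m$-free blue graph at this minimum degree is complete $(m-1)$-partite, so that red contains a spanning clique inside each part, a clique on $q\ge n$ vertices that already hosts $T_n$ --- a contradiction. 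In the Construction-I regime the mechanism is instead a Ramsey-type recursion on $m$ interlocked with the peeling: at a vertex $v$ of small red degree, $|N_B(v)|\ge\delta(G)-d_R(v)$, and one seeks a blue $K_{m-1}$ inside $N_B(v)$, which together with $v$ yields $K_m$; the non-neighbour bound $q-d^*-1$ is calibrated so that $G[N_B(v)]$ inherits a degree condition of the same shape with $m$ replaced by $m-1$. The ``union of $t+2$ cliques'' term $\lfloor q/(t+2)\rfloor$ I would handle by induction on $t$ (equivalently on $N$ in steps of $(n-1)(m-1)$): remove one clique-layer of the extremal decomposition, reduce to the window for $t-1$ while preserving the degree hypothesis, and bottom out at $t=0$, where $d^*=\Delta-\epsilon+1$ and the previous paragraph applies almost directly.

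The main obstacle will be making all of this tight against both constructions simultaneously rather than against either one in isolation. Three difficulties stand out. First, calibrating the embedding lemma so that its thresholds line up with $\Delta-\epsilon+1$ and $\lfloor q/(t+2)\rfloor$ after the non-neighbour losses are paid: the $\epsilon$ and the floor/ceiling leave essentially no slack, and the $\epsilon=2$ case genuinely needs stability rather than a degree count. Second, the intermediate regime of a large, non-expanding red component that carries a few vertices of red degree $\ge\Delta$ yet still omits $T_n$, where neither pure obstruction applies and one must prove a local stability statement forcing such a component close to one of the two extremal shapes. Third --- and this I expect to be the true bottleneck --- meshing the induction on $t$ with the recursion on $m$ without leaking a constant, since the two inductions act on different parameters of the same extremal family. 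A cleaner alternative worth attempting is a global stability version of Chv\'atal's degeneracy argument, showing directly that a $T_n$-free red graph whose complement is $K_m$-free and nearly complete must lie within the union of the two extremal families; but I anticipate the same three difficulties resurfacing in that formulation.
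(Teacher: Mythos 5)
You should first note that the paper does not prove this statement: it is Conjecture \ref{conj1.4}, stated as open, and the authors only verify it in very restricted cases (via Theorem \ref{theo1.3}, for $n$ even, $T_{n}= K_{1, n- 1}$ and $(t+1)(n-1)(m-1)+1\leq N\leq (t+1)(n-1)(m-1)+m-1$), their actual technique being the three-color Ramsey computation $r(K_{1,\ell}, T_{n}, K_{m})$ of Section 2. So there is no paper proof to match, and your submission would have to stand as a complete proof on its own. It does not: it is a program with several steps you yourself flag as unresolved, and its cornerstone is false. The embedding lemma you propose --- a connected graph $C$ with $v(C)\geq n$ and $\delta(C)\geq \max\{\Delta, \lceil (n-1)/2\rceil\}$ contains every tree $T$ on $n$ vertices with maximum degree $\Delta$ --- fails already for spiders. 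Take $T$ to be the spider with center of degree $3$ and three legs of length $\ell$ ($\ell$ even), so $n= 3\ell+ 1$ and $\Delta= 3$, and take $C$ to be two cliques of order $\frac{3\ell}{2}+ 1$ sharing a single cut vertex $v$. Then $C$ is connected, $v(C)= 3\ell+ 1= n$ and $\delta(C)= \frac{3\ell}{2}= \lceil (n-1)/2\rceil$, but $C$ contains no copy of $T$: at most one leg can pass through the cut vertex, so two full legs plus the center, i.e.\ $2\ell+ 1$ vertices, would have to fit inside one clique of order $\frac{3\ell}{2}+ 1$. This is not a repairable constant; the known thresholds for embedding all non-star trees require $\delta\geq n- 2$ with connectivity (Lemma \ref{lemma3.2}, Guo--Volkmann) or $\delta\geq n- 3$ with exceptional trees excluded (Yan--Peng), and nothing near $\lceil (n-1)/2\rceil$ is available for general bounded-degree trees.

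Two further steps would fail even granting an embedding lemma. First, the Andr\'asfai--Erd\H{o}s--S\'os invocation: stability of that type requires minimum degree above $\frac{3m-7}{3m-4}N$, which exceeds the Tur\'an threshold $\frac{m-2}{m-1}N$ by $\Theta(N/m^{2})$, while your hypothesis puts the blue degree only $O(1)$ above Tur\'an; at that level $K_{m}$-free graphs need not be $(m-1)$-partite, let alone complete $(m-1)$-partite. Moreover, even if the blue graph were complete $(m-1)$-partite, your conclusion that ``red contains a spanning clique inside each part'' is wrong: $G$ itself is not complete, and each vertex may miss up to $\lceil N/(m-1)\rceil- d^{*}- 1$ others, so the parts carry red graphs with many absent edges --- exactly the freedom Constructions I and II exploit. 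Second, the meshing of the induction on $t$ with the recursion on $m$ is never carried out, and you acknowledge it as the bottleneck; in particular you do not verify that $G[N_{B}(v)]$ lands in the correct window $(t+1)(n-1)(m-2)+1\leq N'\leq (t+2)(n-1)(m-2)$ with the inherited degree bound. As it stands the proposal neither proves the conjecture nor recovers the paper's partial results, which are obtained by the quite different (and much more economical) route of bounding $\Delta(\overline{G})$ and computing $r(K_{1,\ell}, T_{n}, K_{m})$ exactly for $\ell= t(n-1)+1$.
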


   By {\bf Construction I} and {\bf Construction II},  if Conjecture \ref{conj1.4} is true, then the lower bound of $\delta(G)$ on Conjecture \ref{conj1.4} is tight.
  Applying Theorem \ref{theo1.3},  Conjecture \ref{conj1.4} holds for $n$ even, $T_{n}= K_{1, n- 1}$ and $(t+ 1)(n- 1)(m- 1)+ 1\leq N\leq (t+ 1)(n- 1)(m- 1)+ m-1$.

  In Section 2, we will prove Theorem \ref{theo1.3} by determining  the Ramsey number $r(K_{1, \ell}, T_{n}, K_{m})$ for $\ell= t(n- 1)+ 1$. In Section 3, we strengthen Theorem \ref{theo1.3} if $T_{n}\neq K_{1, n- 1}$. A result in Section 3 implies that Conjecture \ref{conj1.2} holds for $3\leq n\leq t+ 4, m- 2\not\equiv 0\pmod{n- 1}$ and $(t+ 1)(n- 1)(m- 1)+ 1\leq N\leq (t+ 1)(n- 1)(m- 1)+ m- 1$.

\section{Proof of Theorem \ref{theo1.3}}
  We will  prove Theorem \ref{theo1.3} by establishing the following result on the Ramsey number.

  \begin{theo}\label{theo2.1}
    If $\ell= t(n- 1)+ 1$, then
    $$r(K_{1, \ell}, T_{n}, K_{m})= (\ell+ n- 2)(m- 1)+ 1= (t+ 1)(n- 1)(m- 1)+ 1.$$
  \end{theo}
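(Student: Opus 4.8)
The plan is to prove the two inequalities $r(K_{1,\ell}, T_n, K_m) \ge (\ell+n-2)(m-1)+1$ and $r(K_{1,\ell}, T_n, K_m) \le (\ell+n-2)(m-1)+1$ separately, after recording the identity $\ell + n - 2 = t(n-1)+1+n-2 = (t+1)(n-1)$, which shows that the two displayed quantities agree. Throughout I write the three colors as red, blue, green, associated with $K_{1,\ell}$, $T_n$, $K_m$ respectively, and I will repeatedly use the standard fact that a graph of minimum degree at least $n-1$ contains every tree on $n$ vertices, proved by a greedy leaf-order embedding (when a tree vertex with an already-placed parent is added, its parent's image has at least $n-1$ neighbours but at most $n-2$ of them are used, so a free neighbour always exists).

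For the lower bound I would exhibit a coloring of $K_{(\ell+n-2)(m-1)}$ with no red $K_{1,\ell}$, no blue $T_n$ and no green $K_m$. Partition the vertex set into $m-1$ blocks $B_1, \dots, B_{m-1}$, each of size $\ell+n-2 = (t+1)(n-1)$, and color every edge joining distinct blocks green. The green graph is then $(m-1)$-partite, so every green clique meets each block at most once and hence has at most $m-1$ vertices, ruling out a green $K_m$. Inside each block, partition the $(t+1)(n-1)$ vertices into $t+1$ cells of size $n-1$, color the edges within a cell blue and the edges between distinct cells of the same block red. Every blue component then has only $n-1 < n$ vertices, so no blue $T_n$ appears; and within each block the red graph is complete $(t+1)$-partite with parts of size $n-1$, while there are no red edges between blocks, so every vertex has red degree exactly $(t+1)(n-1) - (n-1) = t(n-1) = \ell-1 < \ell$, excluding a red $K_{1,\ell}$. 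This certifies $r(K_{1,\ell}, T_n, K_m) > (\ell+n-2)(m-1)$.

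For the upper bound I would induct on $m$, proving $r(K_{1,\ell}, T_n, K_m) \le (\ell+n-2)(m-1)+1$; the case $m=1$ is trivial since a single vertex is a green $K_1$. For the inductive step, fix a coloring of $K_N$ with $N = (\ell+n-2)(m-1)+1$ having no red $K_{1,\ell}$ and no blue $T_n$, and produce a green $K_m$. The absence of a red $K_{1,\ell}$ means every vertex has red degree at most $\ell-1$. The absence of a blue $T_n$ forces, by the tree-embedding fact, the existence of a vertex $v$ of blue degree at most $n-2$ (otherwise the blue graph would have minimum degree at least $n-1$ and contain $T_n$). Hence the number of non-green edges at $v$ is at most $(\ell-1)+(n-2) = \ell+n-3$, so the green degree of $v$ is at least $N-1-(\ell+n-3) = (\ell+n-2)(m-2)+1$.

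Let $U$ be the green neighbourhood of $v$. Then $|U| \ge (\ell+n-2)(m-2)+1 \ge r(K_{1,\ell}, T_n, K_{m-1})$ by the induction hypothesis, and the coloring restricted to $U$ still has no red $K_{1,\ell}$ and no blue $T_n$, so it contains a green $K_{m-1}$; since $v$ is joined to all of $U$ by green edges, this $K_{m-1}$ together with $v$ forms a green $K_m$, the desired contradiction. The step I expect to be the crux, and the one driving the whole argument, is locating the low-blue-degree vertex through the tree-embedding fact: once it is in place, the bookkeeping $(\ell-1)+(n-2) = \ell+n-3$ is exactly what is needed to push the green degree past the induction threshold, and the lower-bound construction is tight against precisely this count.
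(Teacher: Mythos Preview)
Your proof is correct and follows essentially the same approach as the paper: the same blow-up lower-bound construction (you make it explicit, whereas the paper cites Burr's two-color result $r(K_{1,\ell},T_n)=\ell+n-1$ as a black box and blows up an abstract witnessing coloring), and the same induction on $m$ for the upper bound, driven by the degree count $(\ell-1)+(n-2)$ forcing a vertex of green degree at least $(\ell+n-2)(m-2)+1$. The only cosmetic differences are your base case $m=1$ versus the paper's $m=2$, and your locating the high-green-degree vertex directly via a low-blue-degree vertex while the paper organizes the same step as a contrapositive by first assuming $\Delta(H_g)\le(\ell+n-2)(m-2)$ and then deducing $\delta(H_b)\ge n-1$.
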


  \noindent We will prove Theorem \ref{theo2.1} later. Let us first apply Theorem \ref{theo2.1} to prove Theorem \ref{theo1.3}.\\

  \noindent{\it Proof of Theorem \ref{theo1.3}.\quad} Let $(t+ 1)(n- 1)(m- 1)+ 1\leq N\leq (t+ 2)(n- 1)(m- 1)$, and  let $G$ be a graph on $N$ vertices with $\delta(G)\geq N- t(n- 1)- 1$. We  need to show that $G\rightarrow (T_{n}, K_{m})$. Color $E(G)$ arbitrarily by blue and green and color $E(\overline{G})$ by red. Then we get a red-blue-green edge-colored $K_{N}$. By Theorem 2.1, $N\geq r(K_{1, \ell}, T_{n}, K_{m})$, where $\ell= t(n- 1)+1$. Consequently, $K_{N}\rightarrow (K_{1, \ell}, T_{n}, K_{m})$. Note that $\Delta(\overline{G})\leq N-1- \delta(G)\leq t(n- 1)< \ell$. Thus, there is no red copy of $K_{1, \ell}$. Consequently, $G\rightarrow (T_{n}, K_{m})$.\q

  In what follows, we divide the proof of Theorem \ref{theo2.1} into two parts.

  \begin{theo}\label{theo2.2}
    Let $r'= r(K_{1, \ell}, T_{n})$. Then $$r(K_{1, \ell}, T_{n}, K_{m})\geq (r'- 1)(m- 1)+ 1.$$
  \end{theo}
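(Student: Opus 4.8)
The plan is to prove the lower bound by exhibiting an explicit red-blue-green coloring of $K_{(r'-1)(m-1)}$ that avoids a red $K_{1,\ell}$, a blue $T_n$, and a green $K_m$; such a coloring shows $K_{(r'-1)(m-1)}\not\rightarrow(K_{1,\ell},T_n,K_m)$ and therefore forces $r(K_{1,\ell},T_n,K_m)\geq (r'-1)(m-1)+1$. This is the usual Burr-type blow-up recalled in the introduction, adapted to three colors, with red playing the role of the color for $K_{1,\ell}$, blue for $T_n$, and green for $K_m$.

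First I would invoke the definition of $r'=r(K_{1,\ell},T_n)$: there exists a red-blue edge-coloring $c$ of $K_{r'-1}$ with neither a red $K_{1,\ell}$ nor a blue $T_n$. I fix such a coloring on a vertex set $W$ of size $r'-1$. Next I take $m-1$ pairwise disjoint copies $V_1,\dots,V_{m-1}$ of $(W,c)$, coloring the edges inside each $V_i$ red or blue exactly as in $c$, and coloring every edge joining two distinct parts green. Since the $V_i$ partition the $(r'-1)(m-1)$ vertices and all cross-edges are present, this is a valid coloring of the complete graph on $(r'-1)(m-1)$ vertices.

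Then I would verify the three conditions. Red and blue edges live only inside the parts. Each part is a copy of $c$, so every red degree is at most $\ell-1$ and there is no red $K_{1,\ell}$. Because $T_n$ is connected and blue edges never cross between parts, any blue $T_n$ would have to lie inside a single $V_i$, which is impossible as $c$ contains no blue $T_n$. The green edges form a complete $(m-1)$-partite graph with parts $V_1,\dots,V_{m-1}$, whose clique number is $m-1$ since a green clique meets each part in at most one vertex; hence there is no green $K_m$.

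The step that carries the real content is recognizing that the connectivity of both $K_{1,\ell}$ and $T_n$ confines any red or blue monochromatic copy to a single part, where the extremal two-coloring $c$ already precludes it; the green color is then left only to defeat $K_m$, which the $(m-1)$-partite structure guarantees automatically. I do not expect any genuine obstacle beyond bookkeeping, since the construction is a direct generalization of Burr's argument, and the only subtlety is the appeal to connectedness that keeps the red and blue copies within one part.
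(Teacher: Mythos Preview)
Your proposal is correct and matches the paper's proof essentially line for line: both take $m-1$ disjoint copies of an extremal red-blue coloring of $K_{r'-1}$ and join distinct copies by green edges, then verify that connectivity confines any red $K_{1,\ell}$ or blue $T_n$ to a single part while the $(m-1)$-partite green graph has no $K_m$. There is nothing to add.
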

  \begin{proof}
    Let $G'$ be a red-blue edge-colored $K_{r'- 1}$ such that $G'\not\rightarrow (K_{1, \ell}, T_{n})$ and let $G''$ be a green copy of $K_{m- 1}$. Replace each vertex of $G''$ with a copy of $G'$. Each edge of $G''$ expands into a green copy of $K_{r'- 1, r'- 1}$. Denote the resulting graph by $G$. Note that $G$ is a red-blue-green edge-colored $K_{(r'- 1)(m- 1)}$ such that $G\not\rightarrow (K_{1, \ell}, T_{n}, K_{m})$. Consequently, $r(K_{1, \ell}, T_{n}, K_{m})\geq (r'- 1)(m- 1)+ 1$.\q
  \end{proof}

  \noindent We still need a result from Burr to complete the proof of Theorem \ref{theo2.1}.

  \begin{theo}[Burr \cite{B1}]\label{theo2.3}
    If $\ell= t(n- 1)+ 1$, then $r(K_{1, \ell}, T_{n})= \ell+ n- 1$.
  \end{theo}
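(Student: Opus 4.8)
The plan is to establish the two matching inequalities $r(K_{1,\ell}, T_n) \geq \ell + n - 1$ and $r(K_{1,\ell}, T_n) \leq \ell + n - 1$ separately. Throughout I would exploit the arithmetic identity that the hypothesis $\ell = t(n-1)+1$ supplies, namely $\ell + n - 2 = t(n-1) + 1 + n - 2 = (t+1)(n-1)$, so that $\ell + n - 1 = (t+1)(n-1)+1$.

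For the lower bound I would exhibit an explicit red-blue coloring of $K_{(t+1)(n-1)}$ containing neither a red $K_{1,\ell}$ nor a blue $T_n$. Partition the $(t+1)(n-1)$ vertices into $t+1$ classes, each of size $n-1$; color every edge inside a class blue and every edge between classes red. Then each blue component has only $n-1 < n$ vertices and so cannot host the $n$-vertex tree $T_n$. Moreover, every vertex is joined in red exactly to the $(t+1)(n-1) - (n-1) = t(n-1) = \ell - 1$ vertices outside its own class, so its red degree is $\ell - 1$ and no red $K_{1,\ell}$ appears. This gives $r(K_{1,\ell}, T_n) \geq (t+1)(n-1) + 1 = \ell + n - 1$, and the role of the hypothesis is precisely to make $\ell + n - 2$ a multiple of $n-1$ so that the balanced partition into classes of size exactly $n-1$ exists.

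For the upper bound, consider any red-blue coloring of $K_N$ with $N = \ell + n - 1 = (t+1)(n-1) + 1$ and assume it contains no red $K_{1,\ell}$, i.e. every vertex has red degree at most $\ell - 1 = t(n-1)$. Then in the blue graph $B$ every vertex has degree at least $(N-1) - t(n-1) = (t+1)(n-1) - t(n-1) = n - 1$, so $\delta(B) \geq n - 1$. It remains to show that any graph with minimum degree at least $n-1$ contains every tree on $n$ vertices. I would prove this by the standard greedy embedding: fix an ordering $v_1, \dots, v_n$ of $V(T_n)$ in which each $v_i$ with $i \geq 2$ has a unique neighbor among $v_1, \dots, v_{i-1}$ (such an ordering exists because $T_n$ is a tree), and embed the vertices into $B$ one at a time. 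When placing $v_i$, the image of its already-embedded parent has at least $n-1$ blue neighbors, of which at most $i-2$ are occupied, leaving at least $(n-1)-(i-2) = n-i+1 \geq 1$ free vertices for $i \leq n$; even the last step $i=n$ succeeds. This yields a blue $T_n$, so $r(K_{1,\ell}, T_n) \leq \ell + n - 1$, and combining the two bounds proves the theorem.

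The only genuine content is the greedy tree-embedding lemma underlying the upper bound, the claim that minimum degree $n-1$ forces all $n$-vertex trees; this is the crux of the argument, although it is a routine induction once the tree is processed in the order of a rooted search. The lower bound is a direct construction, so I expect the embedding step to be the main (and only mild) obstacle.
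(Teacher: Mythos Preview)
Your proof is correct. Note, however, that the paper does not actually prove this statement: Theorem~2.3 is quoted from Burr~\cite{B1} and used as a black box. So there is no ``paper's own proof'' to compare against.

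That said, your argument dovetails with the surrounding material. Your upper-bound step---observing that if there is no red $K_{1,\ell}$ then the blue graph has minimum degree at least $n-1$, and then greedily embedding $T_n$ via a $1$-degenerate ordering---is exactly the mechanism the paper invokes in its proof of Theorem~2.1 (see the last paragraph of Section~2). Your lower-bound construction is the natural one and is implicitly what underlies the $m=2$ case of the construction in the proof of Theorem~2.2, though the paper never spells it out. In short, you have supplied a clean self-contained proof of a result the paper only cites, and your methods are fully aligned with those the paper uses elsewhere.
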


  \noindent{\it Proof of Theorem \ref{theo2.1}:\quad} Let $N= (\ell+ n- 2)(m- 1)+ 1$. We only need to show that $r(K_{1, \ell}, T_{n}, K_{m})\leq N$ by Theorem \ref{theo2.2} and Theorem \ref{theo2.3}. Color $E(K_{N})$ arbitrarily by red, blue and green, and denote the resulting graph by $H$. Moreover, let $H_{r}, H_{b}$ and $H_{g}$ be the graph induced by all edges in red, blue and green, respectively. We will show that $H\rightarrow (K_{1, \ell}, T_{n}, K_{m})$ using induction on $m$. It is true for $m= 2$ since $r(K_{1, \ell}, T_{n}, K_{2})= r(K_{1, \ell}, T_{n})= \ell+ n- 1$ by Theorem \ref{theo2.3}. If $\Delta(H_{g})\geq (\ell+ n- 2)(m- 2)+ 1$, then we are done since $r(K_{1, \ell}, T_{n}, K_{m- 1})= (\ell+ n- 2)(m- 2)+ 1$ by the induction hypothesis. Consequently, we may assume that $\Delta(H_{g})\leq (\ell+ n- 2)(m- 2)$, and thus, $\delta(H_{r}\cup H_{b})\geq \ell+ n- 2$. If $\Delta(H_{r})\geq \ell$, then we are done. Consequently, we may assume that $\Delta(H_{r})\leq \ell- 1$, and thus, $\delta(H_{b})\geq n- 1$. Consequently, we can embed $T_{n}$ into $H_{b}$ greedily since $\Delta(T_{n})\leq n- 1$ and $T_{n}$ is $1$-degenerate (we can label $V(T_{n})$ as $\{v_{1}, \dots, v_{n}\}$ such that $|N(v_{i})\cap \{v_{1}, \dots, v_{i- 1}\}|\leq 1$ for each $i\in [n]\backslash [1]$).\q

\section{Beyond Theorem \ref{theo1.3}}
  In fact, we can improve the lower bound on $\delta(G)$ in Theorem \ref{theo1.3} when $T_{n}$ is not a star. In 1995, Guo and Volkmann proved a result similar to Theorem \ref{theo2.3} as follows.

  \begin{theo}[Guo and Volkmann \cite{GV}]\label{theo3.1}
    If $\ell= t(n- 1)+ 2$ and $T_{n}\neq K_{1, n- 1}$, then $r(K_{1, \ell}, T_{n})= \ell+ n- 2$.
  \end{theo}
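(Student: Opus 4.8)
The plan is to prove both directions of $r(K_{1,\ell},T_n)=\ell+n-2$, where $\ell=t(n-1)+2$ and I set $N:=\ell+n-2=(t+1)(n-1)+1$. For the lower bound I would two-color $K_{N-1}=K_{(t+1)(n-1)}$ by letting the blue graph be $t+1$ vertex-disjoint copies of $K_{n-1}$ and the red graph its complement, the complete $(t+1)$-partite graph with parts of size $n-1$. Every red degree then equals $t(n-1)=\ell-2<\ell$, so there is no red $K_{1,\ell}$, while each blue component has only $n-1<n$ vertices and hence no connected copy of $T_n$; this gives $r(K_{1,\ell},T_n)\ge \ell+n-2$.

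For the upper bound, color $K_N$ with no red $K_{1,\ell}$, so $\Delta_{\mathrm{red}}\le \ell-1$ and the blue graph $B$ satisfies $\delta(B)\ge N-1-(\ell-1)=n-2$; it remains to embed $T_n$ into $B$. First I would pass to a single component: every component of $B$ has at least $\delta(B)+1=n-1$ vertices, and a component on exactly $n-1$ vertices must be $K_{n-1}$. If all components had exactly $n-1$ vertices then $(n-1)\mid N$, contradicting $N\equiv 1\pmod{n-1}$ (here $n\ge 4$, so $n-1\ge 3$, and $T_n\ne K_{1,n-1}$ is vacuous for $n=3$). Hence $B$ has a connected component $C$ with $|C|\ge n$ and $\delta(C)\ge n-2$, and it suffices to embed $T_n$ into $C$.

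Inside $C$ I would use that $T_n$ is $1$-degenerate: since $\delta(C)\ge n-2$, a greedy vertex-by-vertex embedding places $T_n$ minus any one leaf (a tree on $n-1$ vertices). Call an embedding $\phi$ of $T_n-v^{\ast}$ (for a leaf $v^{\ast}$ with neighbor $p^{\ast}$) \emph{good} if $\phi(p^{\ast})$ has a $C$-neighbor outside the image $S'=\phi(V(T_n-v^{\ast}))$; a good embedding immediately yields $T_n\subseteq C$ by placing $v^{\ast}$ on such an external neighbor. The engine is a swap: if $\phi(p^{\ast})$ is \emph{bad} (all its neighbors lie in $S'$) then, since $\deg_C(\phi(p^{\ast}))\ge n-2=|S'\setminus\{\phi(p^{\ast})\}|$, the vertex $\phi(p^{\ast})$ is adjacent to all of $S'$; for any leaf $v$ with neighbor $q$ I can then move $v^{\ast}$ onto the vertex vacated by $v$ to obtain an embedding of $T_n-v$, and iterating shows that the absence of any good embedding forces every support-image (image of a neighbor of a leaf) to be adjacent to all of $S'$ and to have no external neighbor. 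Because $T_n$ is not a star it has two leaves with distinct neighbors, and because $C$ is connected there is a boundary vertex $g\in S'$ with an external neighbor, which is therefore not a support-image. Swapping the images of $g$'s preimage and of a suitable support moves a support-image onto the good vertex $g$, producing a good embedding and the desired contradiction.

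The hard part will be this very last step: guaranteeing that some support of $T_n$ can be relocated onto a vertex of $C$ having a neighbor outside the image. In the tight case $|C|=n$ (so $C=K_n$ minus a matching) this is clean: after embedding $T_n$ minus its two distinct-neighbor leaves, I place those two leaves on the two remaining vertices, and the only obstruction—both leaf-neighbors admitting the same unique remaining vertex—would require one vertex to be the matching-partner of two distinct vertices, which is impossible. In general I would push the support/boundary swap above: a boundary vertex $g$ of external degree $d$ fails to be adjacent to at most $d$ vertices of $S'$, so the swap with a support $q$ is valid unless some image of a neighbor of $q$ lies among these few missed vertices; exploiting that $T_n$ non-star gives at least two supports, and if necessary a rotation along a shortest $C$-path from a trapped support-image to the boundary (which is legitimate because trapped support-images are adjacent to all of $S'$), one locates an admissible support. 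Verifying that such a support always exists—that is, ruling out the degenerate configuration in which every support is simultaneously blocked—is the most delicate point of the argument.
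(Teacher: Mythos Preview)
The paper does not prove Theorem~3.1 at all; it is quoted from Guo and Volkmann, and the paper simply records (as Lemma~3.2, also quoted from \cite{GV}) that the embedding statement ``$C$ connected, $|C|\ge n$, $\delta(C)\ge n-2$, $T_n\ne K_{1,n-1}$ $\Rightarrow$ $T_n\subseteq C$'' is the core of the argument. Your reduction to exactly this lemma is correct and matches that structure: the lower-bound colouring on $K_{(t+1)(n-1)}$ is right, and on the upper-bound side the divisibility observation $N\equiv 1\pmod{n-1}$ with $n\ge 4$ correctly produces a blue component $C$ with $|C|\ge n$ and $\delta(C)\ge n-2$.

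Where you go beyond the paper is in trying to prove Lemma~3.2 itself, and there the argument has a real gap. Your swap ``$u\leftrightarrow q$'' (put the support $q$ on the boundary vertex $g=\phi(u)$ and send $u$ to the universal vertex $\phi(q)$) is valid only if $g$ is adjacent in $C$ to the images of all neighbours of $q$ in $T_n-v$ other than $u$; but $g$ can miss precisely such an image, and with just two supports both candidate swaps can be blocked by the \emph{same} missing adjacency. Concretely, for $T_n=P_5$ with $S'=\{\phi(v_2),\phi(v_3),\phi(v_4),\phi(v_5)\}$, $g=\phi(v_5)$ and $g\not\sim\phi(v_3)$, neither the swap $v_5\leftrightarrow v_2$ nor $v_5\leftrightarrow v_4$ is admissible (each requires $g\sim\phi(v_3)$). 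A longer rearrangement does succeed here, e.g.\ $(v_2,v_3,v_4,v_5)\mapsto(\phi(v_5),\phi(v_4),\phi(v_2),\phi(v_3))$, but your sketch gives no systematic rule producing such a permutation in general, and your final sentence concedes as much. Completing this step requires the full case analysis of Guo and Volkmann (or an equivalent device), which is exactly what the paper avoids by citing Lemma~3.2 as a black box.
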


  \noindent The core of the proof of Theorem \ref{theo3.1} is the following.

  \begin{lemma}[Guo and Volkmann \cite{GV}]\label{lemma3.2}
    Let $G$ be a connected graph on at least $n$ vertices with $\delta(G)\geq n- 2$. If $T_{n}\neq K_{1, n- 1}$, then we can embed $T_{n}$ into $G$.
  \end{lemma}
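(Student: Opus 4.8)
The plan is to prove the lemma by a greedy leaf-peeling embedding, exploiting the hypothesis $T_n\neq K_{1,n-1}$ in two ways. First I would record the elementary fact that a graph with minimum degree at least $k$ contains every tree on $k+1$ vertices: order the tree vertices $v_1,\dots,v_{k+1}$ so that each $v_i$ with $i\ge 2$ has a unique neighbour among $v_1,\dots,v_{i-1}$ (trees are $1$-degenerate), and embed them one by one, observing that when $v_i$ is placed the image of its parent still has at least $k-(i-2)\ge 1$ unused neighbours. Since $T_n\neq K_{1,n-1}$, no vertex of $T_n$ has degree $n-1$ (such a vertex would make $T_n$ a star), so $\Delta(T_n)\le n-2$; and the fact above with $k=n-2$ already embeds every tree on $n-1$ vertices into $G$. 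The entire difficulty is therefore to gain the single extra vertex needed for $T_n$.

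To gain it I would peel off two leaves. I would first verify the combinatorial claim that a tree which is not a star has two leaves $\ell_1,\ell_2$ with distinct parents $p_1\neq p_2$: if all leaves shared a common parent $p$, then in the tree $T'$ obtained by removing all leaves every leaf would itself be a parent of a leaf of $T_n$, hence equal to $p$; as a tree on two or more vertices has at least two leaves, $T'$ would be the single vertex $p$, forcing $T_n=K_{1,n-1}$. Deleting $\ell_1,\ell_2$ yields a tree $T_{n-2}$ on $n-2$ vertices, which embeds into $G$ greedily with room to spare, since at each of the $n-2$ steps the relevant parent image keeps at least $n-i\ge 2$ free neighbours. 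Writing $U$ for the set of $n-2$ used vertices, $u_{p_1},u_{p_2}\in U$ for the images of $p_1,p_2$, and $F=V(G)\setminus U$ for the set of at least two free vertices, it remains to place $\ell_1$ at a free neighbour of $u_{p_1}$ and $\ell_2$ at a \emph{distinct} free neighbour of $u_{p_2}$. Since $\deg_G(u_{p_i})\ge n-2>n-3=|U\setminus\{u_{p_i}\}|$, each $u_{p_i}$ has at least one free neighbour, so by a two-element matching argument this final placement succeeds unless $u_{p_1}$ and $u_{p_2}$ have exactly one free neighbour each and it is a common vertex $z$.

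Ruling out this tie is where I expect the real work to lie, and where connectivity and $v(G)\ge n$ are used. In the tie a degree count forces both $u_{p_1}$ and $u_{p_2}$ to be adjacent to every vertex of $U\setminus\{u_{p_i}\}$ and to no free vertex except $z$, so every $z'\in F\setminus\{z\}$ is a non-neighbour of both. If $v(G)=n$ this is immediately contradictory: then $\delta(G)\ge n-2$ means $\overline{G}$ has maximum degree at most $1$, i.e.\ $\overline{G}$ is a matching, and a single vertex $z'$ cannot be the unique non-neighbour of the two distinct vertices $u_{p_1},u_{p_2}$. If $v(G)>n$ I would instead use connectivity together with the extra free vertices: picking $z'\in F\setminus\{z\}$, which has at least $n-2$ neighbours and avoids $u_{p_1},u_{p_2}$, I would relocate the occupant of a suitable used vertex $w\in U\setminus\{u_{p_1},u_{p_2}\}$ onto a free vertex so as to free $w$, and then, since the tie forces $u_{p_2}\sim w$, place $\ell_2$ at $w$ and $\ell_1$ at $z$. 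The delicate bookkeeping of this relocation — choosing $w$ to be the image of a leaf of $T_{n-2}$ whose parent image is adjacent to the intended free target, so that the modified map stays a valid embedding — is the step I expect to be the main obstacle.
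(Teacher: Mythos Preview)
The paper does not prove Lemma~\ref{lemma3.2}; it merely quotes it from Guo and Volkmann~\cite{GV} and uses it as a black box. So there is no in-paper argument to compare against, and the question is simply whether your sketch can be completed to a valid proof.

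Your reduction to the ``tie'' configuration is clean, and the case $v(G)=n$ is dispatched correctly via the matching structure of $\overline{G}$. The genuine gap is exactly where you flag it: the relocation step for $v(G)>n$. Your stated plan is to choose $w\in U\setminus\{u_{p_1},u_{p_2}\}$ that is the image of a leaf of $T_{n-2}$ and slide that leaf to a free vertex. But this can be impossible as written. Take $T_n=P_n$: then $T_{n-2}=P_{n-2}$ is a path whose only leaves are $p_1$ and $p_2$ themselves, so there is \emph{no} leaf of $T_{n-2}$ whose image lies in $U\setminus\{u_{p_1},u_{p_2}\}$, and the single-leaf relocation never gets off the ground. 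If instead you try to move $p_1$ (a leaf of $T_{n-2}$) to some free $f$, you then have to place $\ell_1$ next to $f$ rather than next to $u_{p_1}$, and you need the image $u_{q_1}$ of $p_1$'s neighbour in $T_{n-2}$ to have a free neighbour $f\neq z$; nothing you have derived forces that particular vertex to have such a neighbour. Connectivity does rule out the extreme scenario in which \emph{every} vertex of $U$ has $z$ as its unique free neighbour (otherwise $F\setminus\{z\}$ would be a separate component), but it does not hand you the specific parent image you need.

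In short, the approach is plausible but the relocation step is not yet an argument: you will need either an iterated relocation along the tree (moving a path segment rather than a single leaf), or a smarter initial embedding of $T_{n-2}$ that prevents the tie from arising, and in either case the connectivity hypothesis must be invoked more substantively than your sketch currently does.
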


  \noindent Similar to the proof of Theorem \ref{theo2.1}, applying Lemma \ref{lemma3.2}, we can prove the following.

  \begin{theo}\label{theo3.3}
    If $\ell= t(n- 1)+ 2, m- 2\not\equiv 0\pmod{n- 1}$ and $T_{n}\neq K_{1, n- 1}$, then
    $$r(K_{1, \ell}, T_{n}, K_{m})= (\ell+ n- 3)(m- 1)+ 1= (t+ 1)(n- 1)(m- 1)+ 1.$$
  \end{theo}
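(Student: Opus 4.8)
The plan is to mirror the proof of Theorem \ref{theo2.1}, replacing the combinatorial inputs by their sharper analogues. The lower bound comes for free: by Theorem \ref{theo2.2} we have $r(K_{1,\ell}, T_n, K_m) \geq (r'-1)(m-1)+1$ where $r' = r(K_{1,\ell}, T_n)$, and since $\ell = t(n-1)+2$ with $T_n \neq K_{1,n-1}$, Theorem \ref{theo3.1} gives $r' = \ell+n-2$, so $r'-1 = \ell+n-3$. Hence $r(K_{1,\ell}, T_n, K_m) \geq (\ell+n-3)(m-1)+1$, which is exactly the claimed value. Note $(\ell+n-3) = (t(n-1)+2)+n-3 = (t+1)(n-1)$, confirming the stated identity $(\ell+n-3)(m-1)+1 = (t+1)(n-1)(m-1)+1$.

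For the upper bound I would set $N = (\ell+n-3)(m-1)+1$ and run the same induction on $m$ as in Theorem \ref{theo2.1}. Color $E(K_N)$ arbitrarily by red, blue, green, obtaining $H_r, H_b, H_g$. The base case $m=2$ is Theorem \ref{theo3.1}. For the inductive step, if $\Delta(H_g) \geq (\ell+n-3)(m-2)+1$ then the green neighborhood of a maximum-degree vertex spans enough vertices to apply the $(m-1)$ case and finish; otherwise $\delta(H_r \cup H_b) \geq (N-1) - (\ell+n-3)(m-2) = (\ell+n-3)+1 = \ell+n-2$. If $\Delta(H_r) \geq \ell$ we are done, so assume $\Delta(H_r) \leq \ell-1$, whence $\delta(H_b) \geq (\ell+n-2) - (\ell-1) = n-1$.

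The step that differs from Theorem \ref{theo2.1}, and which I expect to be the main obstacle, is the final embedding of $T_n$ into $H_b$. In Theorem \ref{theo2.1} the minimum degree forced was $n-1$, enough for a naive greedy embedding; here the analogous bookkeeping only delivers $\delta(H_b) \geq n-2$ on the relevant subgraph once we track the arithmetic more carefully, so the greedy argument no longer suffices and I must invoke Lemma \ref{lemma3.2} instead. The subtlety is that Lemma \ref{lemma3.2} requires a \emph{connected} host graph on at least $n$ vertices with $\delta \geq n-2$, whereas $H_b$ need only have large minimum degree globally. I would therefore argue that a minimum-degree-$(n-2)$ graph on $N$ vertices has a connected component of order at least $n$ (since $\delta(H_b)\geq n-2$ already forces each component to have at least $n-1$ vertices, and a component on exactly $n-1$ vertices would be $K_{n-1}$, into which any $T_n \neq K_{1,n-1}$ on $n$ vertices cannot embed but which is handled by passing to a larger component guaranteed by the counting), restrict to that component, and apply Lemma \ref{lemma3.2}.

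This is precisely where the hypothesis $m-2 \not\equiv 0 \pmod{n-1}$ enters: it is needed to guarantee that the $\delta(H_b) \geq n-2$ bound can be upgraded to give a component large enough and of the right structure for Lemma \ref{lemma3.2}, ruling out the extremal near-balanced $(m-1)$-partite configuration from {\bf Construction I} that would otherwise make $\Delta(H_g)$, $\Delta(H_r)$ and $\delta(H_b)$ simultaneously just short of every threshold. I would verify the divisibility condition forces a strict surplus of at least one vertex in the degree count, tipping one of the three inequalities into the ``done'' regime.
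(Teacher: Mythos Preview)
Your overall plan matches the paper's intended argument: lower bound via Theorem~\ref{theo2.2} and Theorem~\ref{theo3.1}, upper bound by the same induction on $m$ as in Theorem~\ref{theo2.1}, with Lemma~\ref{lemma3.2} replacing the greedy embedding at the final step. Two points need correcting, however.

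First, the arithmetic in your inductive step is off. With $N=(\ell+n-3)(m-1)+1$ and $\Delta(H_g)\le(\ell+n-3)(m-2)$ one obtains
\[
\delta(H_r\cup H_b)\ \ge\ (N-1)-(\ell+n-3)(m-2)\ =\ \ell+n-3,
\]
not $\ell+n-2$; hence $\delta(H_b)\ge n-2$ from the outset, and there is no second pass in which the bound ``drops'' to $n-2$. Your write-up is internally inconsistent here.

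Second, and more substantively, your explanation of where the hypothesis $m-2\not\equiv 0\pmod{n-1}$ enters is incorrect. The existence of a blue component on at least $n$ vertices follows immediately from $N=(t+1)(n-1)(m-1)+1\equiv 1\pmod{n-1}$: since $\delta(H_b)\ge n-2$, every component has at least $n-1$ vertices, and if all had exactly $n-1$ then $(n-1)\mid N$, a contradiction. This holds for every $m$ and has nothing to do with the congruence on $m-2$; Construction~I is not relevant to this step. In fact the entire upper-bound induction goes through for all $m\ge 2$, which you actually need, since your inductive call is to $m-1$ and $m-1$ need not satisfy $(m-1)-2\not\equiv 0\pmod{n-1}$ even when $m$ does. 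So rather than locating a use of the divisibility hypothesis, you should be proving the upper bound unconditionally in $m$; the hypothesis plays no role in the argument the paper indicates.
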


  \noindent Similar to the proof of Theorem \ref{theo1.3}, the following is true by applying Theorem \ref{theo3.3}.

  \begin{theo}\label{theo3.4}
    For given positive integers $n, m, N$ and a non-negative integer $t$, assume that $(t+ 1)(n- 1)(m- 1)+ 1\leq N\leq (t+ 2)(n- 1)(m- 1)$. Let $G$ be a graph of $N$ vertices with $\delta(G)\geq N- t(n- 1)- 2$. If $m- 2\not\equiv 0\pmod{n- 1}$ and $T_{n}\neq K_{1, n- 1}$, then $G\rightarrow (T_{n}, K_{m})$.
  \end{theo}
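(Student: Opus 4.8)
The plan is to follow the proof of Theorem~\ref{theo1.3} essentially verbatim, feeding in the sharper three-color Ramsey value of Theorem~\ref{theo3.3} in place of Theorem~\ref{theo2.1}. This is exactly the point of isolating the Ramsey estimate as a separate statement: the extra hypotheses $T_{n}\neq K_{1,n-1}$ and $m-2\not\equiv 0\pmod{n-1}$ are assumed precisely so that Theorem~\ref{theo3.3} is available with $\ell=t(n-1)+2$ rather than $\ell=t(n-1)+1$, and it is this increase of $\ell$ by one that will license the improved minimum-degree bound $N-t(n-1)-2$. Concretely, given $G$ on $N$ vertices with $\delta(G)\geq N-t(n-1)-2$, I would color $E(G)$ arbitrarily by blue and green and color every non-edge of $G$, that is every edge of $\overline{G}$, red, obtaining a red-blue-green coloring of $K_{N}$.

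Set $\ell=t(n-1)+2$. By Theorem~\ref{theo3.3} (applicable since $T_{n}\neq K_{1,n-1}$ and $m-2\not\equiv 0\pmod{n-1}$) we have $r(K_{1,\ell},T_{n},K_{m})=(t+1)(n-1)(m-1)+1\leq N$, so $K_{N}\rightarrow(K_{1,\ell},T_{n},K_{m})$; that is, the coloring contains a red $K_{1,\ell}$, a blue $T_{n}$, or a green $K_{m}$. The one arithmetic check is that the red star cannot occur: since the red edges are exactly the non-edges of $G$, every vertex has red degree at most $N-1-\delta(G)\leq N-1-(N-t(n-1)-2)=t(n-1)+1<\ell$, hence $\Delta(\overline{G})<\ell$ and no red $K_{1,\ell}$ appears. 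Therefore the guaranteed monochromatic copy is a blue $T_{n}$ or a green $K_{m}$, both of which lie inside $G$; as blue and green form an arbitrary two-coloring of $E(G)$, this is precisely the assertion $G\rightarrow(T_{n},K_{m})$.

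The deduction above presents no real obstacle once Theorem~\ref{theo3.3} is granted: it is a copy of the proof of Theorem~\ref{theo1.3} with the single change $t(n-1)+1\mapsto t(n-1)+2$ in the red-degree bound, so the only thing to verify in Theorem~\ref{theo3.4} itself is that $t(n-1)+1$ stays \emph{strictly} below $\ell=t(n-1)+2$, which it does. The actual work sits entirely in Theorem~\ref{theo3.3}, which I would establish along the lines of the proof of Theorem~\ref{theo2.1}: the lower bound comes from the blow-up construction of Theorem~\ref{theo2.2} fed by the Guo--Volkmann value $r(K_{1,\ell},T_{n})=\ell+n-2$ (Theorem~\ref{theo3.1}), while the upper bound is an induction on $m$ whose inductive step now only yields $\delta(H_{b})\geq n-2$ in the blue graph, so that the greedy embedding used for Theorem~\ref{theo2.1} must be replaced by Lemma~\ref{lemma3.2}. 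Thus the genuinely delicate part lies not in Theorem~\ref{theo3.4} but in confirming that the hypotheses $T_{n}\neq K_{1,n-1}$ and $m-2\not\equiv 0\pmod{n-1}$ are exactly what make this sharper Ramsey value hold.
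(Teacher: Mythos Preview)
Your proposal is correct and matches the paper's own approach: the paper simply states that Theorem~\ref{theo3.4} follows from Theorem~\ref{theo3.3} by the same argument as Theorem~\ref{theo1.3}, and you have spelled out exactly that deduction with $\ell=t(n-1)+2$ in place of $\ell=t(n-1)+1$. The arithmetic check $\Delta(\overline{G})\leq t(n-1)+1<\ell$ is the only thing to verify, and you have done so.
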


  \begin{remark}
    Applying Theorem \ref{theo3.4} and direct calculations, we can obtain that Conjecture \ref{conj1.2} holds for $3\leq n\leq t+ 4, m- 2\not\equiv 0\pmod{n- 1}$ and $(t+ 1)(n- 1)(m- 1)+ 1\leq N\leq (t+ 1)(n- 1)(m- 1)+ m- 1$.
  \end{remark}

  In fact, Yan and Peng \cite{YP} strengthened Lemma \ref{lemma3.2} (Theorem 1.2 in \cite{YP}). They showed that if $G$ is a connected graph on at least $n$ vertices with $\delta(G)\geq n- 3$, then all except some special trees with $n$ vertices can be embedded into $G$ (see Theorem 1.2 in \cite{YP}). Applying this result and the same discussion as before, we can confirm Conjecture \ref{conj1.2} for more values of $n, m$ and $N$. Moreover, Conjecture \ref{conj1.2} may be verified by determining all the values of $r(K_{1, \ell}, P_{n}, K_{m})$.

\end{document}